\renewcommand{\pod}[1]{\allowbreak\mathchoice
  {\if@display \mkern 18mu\else \mkern 8mu\fi (#1)}
  {\if@display \mkern 18mu\else \mkern 8mu\fi (#1)}
  {\mkern4mu(#1)}
  {\mkern4mu(#1)}
}
\theoremstyle{plain}
\newtheorem*{thm*}{Theorem}
\newtheorem{lem}{Lemma}
\theoremstyle{definition}
\newcommand{\mb}{\mathbb}
\def \a{\alpha} \def \b{\beta}  \def \e{\varepsilon} \def \g{\gamma}    \def \t{\theta} 
\numberwithin{equation}{section}
\renewcommand{\labelenumi}{\setlength{\labelwidth}{\leftmargin}
   \addtolength{\labelwidth}{-\labelsep}
   \hbox to \labelwidth{\theenumi.\hfill}}
\begin{document}
\title{Diophantine approximation with smooth numbers}
\author{Roger Baker\\
I\MakeLowercase{n} m\MakeLowercase{emory of} R\MakeLowercase{ichard} A\MakeLowercase{skey}}
\address{Department of Mathematics\newline
\indent Brigham Young University\newline
\indent Provo, UT 84602, U.S.A}
\email{baker@math.byu.edu}
%\date{\today}

 \begin{abstract}
Let $\t$ be an irrational number and $\varphi$ a real number. Let $C > 2$ and $\e > 0$. There are infinitely many positive integers $n$ free of prime factors $> (\log n )^C$, such that 
 \[\|\t n + \varphi\| < n^{-\left(\frac 13 - \frac 2{3C}\right) 
 + \e}.\]
 \end{abstract}
\bigskip

\keywords{exponential sums, smooth numbers, distribution modulo one.}

\subjclass[2020]{Primary 11J25, Secondary 11L07}

\maketitle

\section{Introduction}\label{sec:intro}
Let $\t$ be an irrational number and $\varphi$ a real number. There are many results in the literature concerning small distances $\| \t n + \varphi \|$ of $\t n + \varphi$ from the nearest integer, suggested by the classical inequality. 
 \[\|\t n\| < 1/n\]
for infinitely many $n \in \mb N$. We mention five results here. Harman \cite{har} showed that
 \[\|\t p + \varphi\|  <  p^{-7/22}\]
for infinitely many primes $p$, and Matomaki \cite{mato}  improved the exponent to $-1/3 \, + \e$ in the case $\varphi = 0$. The exponent in \cite{mato} is the limit of current methods; however, Irving \cite{irv} showed that
 \[\|\t n\| < n^{-8/23 + \e}\]
for infinitely many $n$ that are products of two primes. (As usual, $\e$ denotes an arbitrary positive number.)

For square-free integers one can do much better. Heath-Brown \cite{hb} proved that
 \[\|\t n\| < n^{-2/3 + \e}\]
for infinitely many square-free integers $n$.

It is natural to ask what can be done for smooth integers $n$. Yau \cite{yau} has shown (among other results) that if $\t$ is badly approximable, then
 \begin{equation}\label{eq1.1}
\|\t n + \varphi\| < n^{-\frac 14 + \e} 
 \end{equation}
for infinitely many integers $n$ free of prime factors greater than $n^\e$. (Thanks are due to Glyn Harman for pointing out this paper to me.)

In the present paper we consider stronger smoothness conditions on $n$.

 \begin{thm*}
Let $\t$ be an irrational number and $\varphi$ a real number. Let $C > 2$. There are infinitely many $n \in \mb N$ free of prime factors greater than $(\log n)^C$ for which
 \begin{equation}\label{eq1.2}
\|\t n + \varphi\| < n^{-\left(\frac 13 - \frac 2{3C}\right)+\e}.
 \end{equation} 
  \end{thm*}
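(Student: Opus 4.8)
The plan is to detect the condition $\|\theta n+\varphi\|<\delta$ by Fourier analysis and reduce to bilinear estimates for exponential sums twisted by $e(h\theta n)$ over the set of $y$-smooth integers, where $y=(\log N)^C$. I would put $\beta=\tfrac13-\tfrac2{3C}$, $\delta=N^{-\beta+\varepsilon}$, $H\asymp\delta^{-1}$, and — importantly — not work with a general large $N$ but fix a convergent $p/q$ of $\theta$ and take $N$ to be a fixed power of $q$, the exponent to be pinned down by the optimization; running over the convergents then produces infinitely many admissible $n$. Let $\mathbf 1_y$ be the indicator of $\{P^+(n)\le y\}$ and $\psi$ a Beurling--Selberg minorant of the indicator of $\{x:\|x\|<\delta\}$ with $\widehat\psi$ supported in $[-H,H]$, $\widehat\psi(0)\asymp\delta$, $|\widehat\psi(h)|\ll\delta$. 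It suffices to prove
\[
\sum_{N/2<n\le N}\mathbf 1_y(n)\,\psi(\theta n+\varphi)=\widehat\psi(0)\,\Psi'(N,y)+\sum_{1\le|h|\le H}\widehat\psi(h)\,e(h\varphi)\,S_h>0,
\]
where $\Psi'(N,y)=\Psi(N,y)-\Psi(N/2,y)$ and $S_h=\sum_{N/2<n\le N}\mathbf 1_y(n)e(h\theta n)$.

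For the main term I would invoke the classical count: with $u=\log N/\log y\sim\frac{\log N}{C\log\log N}$ one gets $u\log u\sim\frac{\log N}{C}$, so $\Psi(N,y)=N^{1-1/C+o(1)}$ and $\Psi'(N,y)\gg N^{1-1/C-o(1)}$; thus the main term is $\gg\delta N^{1-1/C-o(1)}$. Since $|\widehat\psi(h)|\ll\delta$, it remains to show $\sum_{1\le|h|\le H}|S_h|=o\bigl(\Psi'(N,y)\bigr)$.

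This is the heart of the argument, and here I would use the exceptionally rich divisor structure of smooth numbers. Two features are available. First, the identity $\mathbf 1_y=\mathbf 1*\mu\mathbf 1_{\mathrm{rough}}$ (with $\mathbf 1_{\mathrm{rough}}$ the indicator of $\{P^-(n)>y\}$) lets one extract a completely unrestricted (``Type I'') variable, after which the geometric-sum bound $\sum_{d\le D}\min(N/d,\|h\theta d\|^{-1})\ll(N/q_h+D+q_h)N^{o(1)}$ applies, where $a/q_h$ is the approximation to $h\theta$ coming from $\theta\approx p/q$, so that $q_h=q/(h,q)$. Second, every $y$-smooth $n\in(N/2,N]$ has divisors in every interval $(z/y,z]$, which permits factoring $n$ into two or three pieces of freely prescribed sizes, producing ``Type II'' bilinear sums $\sum_d\sum_m a_db_m e(h\theta dm)$ — treated by Cauchy--Schwarz together with $\sum_{l\le L}\min(M,\|h\theta l\|^{-1})\ll(L/q_h+1)(M+q_h)N^{o(1)}$, in which the smoothness of the coefficients contributes the extra saving $\|a\|_2^2\|b\|_2^2\ll\Psi(N,y)N^{o(1)}$ — and ``Type III'' trilinear sums, which are what allow the genuinely ``middle'' factorizations to be handled. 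Organizing these ingredients via a Harman-type sieve, so that one only ever needs such Type I, Type II and Type III information (never a pointwise bound on the full smooth sum), and summing over $1\le|h|\le H$ using $\sum_{h\le H}q_h^{-1}\ll (H/q)N^{o(1)}$ and $\sum_{h\le H}q_h\ll qHN^{o(1)}$, one reduces matters to a numerical optimization of the exponent $\kappa$ in $N=q^{\kappa}$ against the admissible ranges of the bilinear variables. The condition $C>2$ is exactly what is needed to make the critical Type II contribution admissible, and the optimum is $\beta=\tfrac13-\tfrac2{3C}$.

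The main obstacle is precisely this uniform treatment of $S_h$. Three points carry the real difficulty: (i) the combinatorics of decomposing $\mathbf 1_y$ via the divisor structure without losing more than $N^{o(1)}$ and while keeping the ranges of the bilinear variables flexible enough that $q_h$ always lands in the productive window — the sieve has to be run carefully so that Type III covers the configurations that Type I and Type II miss; (ii) the Diophantine bookkeeping forcing the scale $N$ to be tied to a convergent of $\theta$ (this, and nothing else, is why one gets only infinitely many $n$ rather than all large $N$), together with the need to treat the $N^{o(1)}$ values of $h\le H$ for which $h\theta$ is close to a rational of small denominator, for which the trivial bound is useless and cancellation must still be extracted — for instance from the distribution of $y$-smooth integers in residue classes to a modulus possessing a prime factor exceeding $y$; and (iii) verifying that all the resulting error terms are genuinely $o(\delta\Psi'(N,y))$, which is where the threshold $C>2$ and the exponent $\tfrac13-\tfrac2{3C}$ emerge from the bookkeeping.
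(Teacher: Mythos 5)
Your write-up stops exactly where the theorem actually lives. The reduction to ``$\sum_{1\le h\le H}|S_h|=o(\Psi'(N,y))$'' for the exponential sum over \emph{all} $y$-smooth $n\sim N$ is standard; everything after that is asserted, not proved: no Type I/II/III ranges are specified, no estimation is performed, no optimization is carried out, and your own points (i)--(iii) concede that the decisive steps are open. Moreover, the route you sketch meets a quantitative obstruction you never address. With $y=(\log N)^C$ the smooth numbers have density $N^{-1/C+o(1)}$, so the target $o(N^{1-1/C+o(1)})$ lies far below the trivial mass of any piece of a decomposition in which one variable is unrestricted or merely $y$-rough. The identity $\mathbf{1}_y=\mathbf{1}*\mu\,\mathbf{1}_{\mathrm{rough}}$ produces exactly such pieces: the rough variable runs up to $N$, and in its tail (rough $d\sim D$ large, free $m\sim N/D$) the Cauchy--Schwarz bound you quote has $\ell^2$ masses about $D/\log y$ and $N/D$ --- no sparseness saving at all --- while the Diophantine factor is at least $(N/q_h+q_h)^{1/2}\ge N^{1/4}$; each such $h$ then contributes $\gg N^{3/4+o(1)}$, and summing over $h\le H\approx N^{\beta}$ already exceeds $N^{1-1/C}$ for every $C>2$. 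So either Type III or some genuinely new input must rescue these configurations, and none is supplied; likewise the claim that ``$C>2$ is exactly what makes the critical Type II contribution admissible'' and that the bookkeeping ``emerges'' with $\beta=\tfrac13-\tfrac2{3C}$ is unsupported (in the actual proof, $C>2$ enters only to make the exponent positive). The $q_h=q/(h,q)$ issue you raise in (ii) is also real and left unresolved: $hp/q$ in lowest terms need not satisfy the $|h\theta-b/q_h|<q_h^{-2}$ hypothesis of the bilinear lemma.

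The paper avoids all of this by not trying to detect every smooth $n$ at all. It fixes a convergent $a/q$, sets $x^{(1+\gamma)/2}=q$ with $\gamma=\tfrac13-\tfrac2{3C}-\tfrac{5\e}6$, and looks only for $n=uv$ with $u$ a smooth number in $[q,2q)$ and $v$ a smooth number of size $x^{(1-\gamma)/2}$; such $n$ are automatically $(\log n)^C$-smooth, and the bilinear structure with \emph{both} coefficient sequences sparse (this is where the Hildebrand--Tenenbaum counts enter) is built in by construction, so no sieve, no Type I or trilinear sums, and no multiplicity bookkeeping are needed. The Fourier minorant is replaced by the elementary Lemma \ref{lem3}, and the $h$-variable is absorbed into the short smooth factor ($w=hv$), so the single application of Lemma \ref{lem4} is made with the exact fraction $a/q$ and your $(h,q)$ difficulty never arises; one Cauchy--Schwarz estimate then contradicts the lower bound and yields precisely $\gamma\ge\tfrac13-\tfrac2{3C}-\tfrac{4\e}5$. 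Your ``divisors in every interval'' remark points in this direction --- it is essentially the paper's construction --- but as proposed your argument has an unexecuted core and, in the pieces you do describe, bounds that do not close.
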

  
Clearly this implies that in \eqref{eq1.1} we may replace $\frac 14$ by $\frac 13$ (without assuming $\t$ badly approximable.)

Exponential sum bounds for the $y$-smooth numbers up to $x$ given by Fouvry and Tenenbaum \cite{fouten} and Harper \cite{harp} appear to deliver weaker bounds than \eqref{eq1.2}.  Our exponential sum, by contrast, is tailored to this particular application.

We conclude this section with lemmas that will be used in the proof of the theorem. Let $\Psi(x,y)$ denote the number of $n \le x$ that are free of prime factors $> y$. Let $u = \log x/\log y$.

 \begin{lem}\label{lem1}
For $x \ge y \ge 2$, denote by $\a = \a(x,y)$ the unique solution of
 \[\sum_{p \le y}\ \frac{\log p}{p^\a-1} = \log x.\]
For $1 \le c \le y$, we have
 \[\Psi(cx,y) = \Psi(x,y)c^{\a(x,y)}
 \left(1 + O\left(\frac 1u + \frac{\log y}y\right)\right).\]
 \end{lem}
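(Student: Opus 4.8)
The plan is to deduce Lemma~\ref{lem1} by comparing the Hildebrand--Tenenbaum saddle-point asymptotics for $\Psi(cx,y)$ and $\Psi(x,y)$. Write $\z(\s,y)=\prod_{p\le y}(1-p^{-\s})^{-1}$ and $\phi_j(\s)=(-1)^j\frac{d^j}{d\s^j}\log\z(\s,y)$ for $j\ge1$, so that $\phi_1(\s)=\sum_{p\le y}\frac{\log p}{p^\s-1}$; one checks that $\phi_1,\phi_2,\phi_3>0$, that $\phi_1$ and $\phi_2$ are decreasing, and that $\phi_1(\a)=\log x$ and $\phi_1(\b)=\log(cx)$, where $\a=\a(x,y)$ and $\b=\a(cx,y)$. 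The Hildebrand--Tenenbaum theorem gives, uniformly for $x\ge y\ge 2$,
\[
\Psi(x,y)=\frac{x^{\a}\,\z(\a,y)}{\a\sqrt{2\pi\,\phi_2(\a)}}\left(1+O\!\left(\frac1u+\frac{\log y}{y}\right)\right).
\]
Applying this to $x$ and to $cx$ (note $\log(cx)/\log y\ge u$, so the error term persists for $\Psi(cx,y)$) reduces the lemma to
\[
\frac{(cx)^{\b}\,\z(\b,y)\,\a\sqrt{\phi_2(\a)}}{x^{\a}c^{\a}\,\z(\a,y)\,\b\sqrt{\phi_2(\b)}}=1+O\!\left(\frac1u+\frac{\log y}{y}\right);
\]
thus the $(\log y)/y$ in the conclusion will be exactly the term inherited from Hildebrand--Tenenbaum, everything else contributing $O(1/u)$.

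I would bound the three correction factors in turn. Put $G(t)=t\,\a(t)+\log\z(\a(t),y)$, where $\a(t)$ is defined by $\phi_1(\a(t))=t$; differentiating and using $\phi_1(\a(t))=t$ gives $G'(t)=\a(t)$, whence
\[
\log\frac{(cx)^{\b}\z(\b,y)}{x^{\a}c^{\a}\z(\a,y)}=G(\log cx)-G(\log x)-\a\log c=\int_{\log x}^{\log(cx)}\bigl(\a(t)-\a\bigr)\,dt .
\]
Since $\a'(t)=-1/\phi_2(\a(t))$ and $\phi_2$ is decreasing, $|\a(t)-\a|\le(t-\log x)/\phi_2(\a)$ on the range of integration, so this integral is $O\bigl((\log c)^2/\phi_2(\a)\bigr)=O\bigl((\log y)^2/\phi_2(\a)\bigr)$. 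For the remaining factors, $\phi_1(\a)-\phi_1(\b)=-\log c$ together with $\phi_1'=-\phi_2$ and the monotonicity of $\phi_2$ gives $0\le\a-\b\le(\log c)/\phi_2(\a)$, and then $\a/\b=1+O\bigl((\a-\b)/\a\bigr)$, while the mean value theorem gives $\phi_2(\a)/\phi_2(\b)=1+O\bigl(\phi_3(\xi)(\a-\b)/\phi_2(\b)\bigr)$ for some $\xi\in[\b,\a]$.

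To turn these into $O(1/u)$ I need two size estimates for the $\phi_j$ at and near the saddle. The elementary one: with $q=p^\s$, the inequality $q\log q\ge q-1$ for $q\ge1$ gives $\s\,\phi_2(\s)\ge\phi_1(\s)$ term by term, so $\a\,\phi_2(\a)\ge\log x$; an analogous term-by-term comparison gives $\phi_3(\s)\ll(\log y+1/\s)\,\phi_2(\s)$, and a one-step bootstrap then gives $\phi_2(\b)\asymp\phi_2(\a)$ (for $u$ large). The one I would quote rather than reprove is the Hildebrand--Tenenbaum bound $\phi_2(\a(x,y))\gg\log x\,\log y$, uniform for $x\ge y\ge 2$. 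With these in hand, $(\log y)^2/\phi_2(\a)\ll 1/u$, $(\a-\b)/\a\le(\log c)/\bigl(\a\,\phi_2(\a)\bigr)\ll 1/u$, and $\phi_3(\xi)(\a-\b)/\phi_2(\b)\ll 1/u$ as well; so each correction factor is $1+O(1/u)$, and restoring the Hildebrand--Tenenbaum error term gives the assertion of Lemma~\ref{lem1}.

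The main obstacle is not conceptual but one of uniformity and bookkeeping: isolating versions of the Hildebrand--Tenenbaum asymptotic and of the lower bound $\phi_2\gg\log x\log y$ that hold uniformly across the whole range $x\ge y\ge 2$, and checking that no implied constant quietly depends on $c$ as $c$ runs over $[1,y]$. When $u=O(1)$ (that is, $y$ close to $x$) the asserted error term has size $1$ and the claim degenerates to $\Psi(cx,y)\asymp c^{\a}\Psi(x,y)$, which the same computation still delivers.
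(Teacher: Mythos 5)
Your proposal is sound, but note that the paper does not actually prove this lemma: its ``proof'' is the single citation to Corollary~1.7 of \cite{hildten}, so the statement is imported as a black box. What you have written is essentially a reconstruction of how Hildebrand and Tenenbaum themselves deduce the local behaviour of $\Psi(x,y)$ from their saddle-point formula, and the skeleton checks out: the identity $G'(t)=\alpha(t)$ and the resulting representation of the main factor as $\int_{\log x}^{\log cx}(\alpha(t)-\alpha)\,dt$, the term-by-term inequalities $\sigma\,\phi_2(\sigma)\ge\phi_1(\sigma)$ and $\phi_3(\sigma)\ll(\log y+1/\sigma)\,\phi_2(\sigma)$, and the quoted Hildebrand--Tenenbaum bound $\phi_2(\alpha)\gg\log x\,\log y$ (which is indeed available uniformly, in the sharper form $\phi_2(\alpha)\asymp\bigl(1+(\log x)/y\bigr)\log x\,\log y$) do combine to show each correction factor is $1+O(1/u)$, with the $(\log y)/y$ term inherited from the asymptotic formula. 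Two points deserve tightening. First, $\alpha/\beta=1+O\bigl((\alpha-\beta)/\alpha\bigr)$ and your bound on $\phi_2(\alpha)/\phi_2(\beta)$ need $\beta\asymp\alpha$, which your bootstrap supplies only once $u$ exceeds a constant. Second, in the complementary range $u=O(1)$ the relative error $O(1/u)$ in the saddle-point formula is $O(1)$, so taking the ratio of the two formulas yields only an upper bound for $\Psi(cx,y)/\bigl(c^{\alpha}\Psi(x,y)\bigr)$, not the two-sided $\asymp$ you assert; there you should argue directly (e.g.\ $\Psi(cx,y)\le cx$, $\Psi(x,y)\gg_{u}x$, and $c^{1-\alpha}\le y^{1-\alpha}\ll_{u}1$), which is all the lemma claims in that regime anyway. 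With those caveats your route gives a legitimate, essentially self-contained proof of what the paper simply cites; what the citation buys is precisely not having to do this uniformity bookkeeping across the whole range $x\ge y\ge2$, $1\le c\le y$.
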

 
 \begin{proof}
This is Corollary 1.7 of \cite{hildten}.
 \end{proof}

 \begin{lem}\label{lem2}
For sufficiently large $x$ and $y=(\log x)^C$, $C > 1$, we have, with $\a = \a(x,y)$,
 \begin{equation}\label{eq1.3}
\left|\a - \left(1 - \frac 1C\right)\right| < \e 
 \end{equation}
and
 \begin{equation}\label{eq1.4}
x^{1 - \frac 1C -\e} \ll \Psi(x,y) \ll x^{1-\frac 1C + \e}. 
 \end{equation}

Implied constants will depend at most on $\e$.
 \end{lem}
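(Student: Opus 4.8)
The plan is to establish \eqref{eq1.3} first, as an estimate for the saddle point $\a(x,y)$ drawn directly from its defining equation, and then to deduce \eqref{eq1.4} by combining Rankin's inequality (for the upper bound) with a crude combinatorial count (for the lower bound). Throughout $y=(\log x)^C$, so that $\log x=y^{1/C}$ and $u=\log x/\log y=(\log x)/(C\log\log x)\to\infty$; implied constants are allowed to depend on $C$ and $\e$, and the final inequalities are claimed only for $x$ sufficiently large.

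For \eqref{eq1.3}, set $f(\s)=\sum_{p\le y}\frac{\log p}{p^\s-1}$. This is strictly decreasing, so $\a$ is the unique root of $f(\a)=\log x=y^{1/C}$. Chebyshev's bound $\sum_{p\le y}\log p\gg y$ gives $f(\s)\ge y^{-\s}\sum_{p\le y}\log p\gg y^{1-\s}$, so the equation forces $y^{1-\a}\ll y^{1/C}$, whence $\a\ge 1-\tfrac1C-O(1/\log y)$; in particular $\a\ge\a_0$ for a constant $\a_0=\a_0(C)>0$ once $x$ is large. Next, fixing $\d=\tfrac1{2C}$, partial summation gives $f(1-\d)\ll_C y^{\d}=o(y^{1/C})$, so by monotonicity $\a<1-\d$. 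Thus $\a$ lies in the compact subinterval $[\a_0,1-\d]$ of $(0,1)$, and there $1\le\tfrac{p^\s}{p^\s-1}\le\tfrac1{1-2^{-\a_0}}$ together with partial summation (and Chebyshev) yields $f(\s)\asymp_C y^{1-\s}$. Comparing with $f(\a)=y^{1/C}$ gives $(1-\a)\log y=\tfrac1C\log y+O_C(1)$, i.e.
\[\a=1-\tfrac1C+O_C(1/\log y)=1-\tfrac1C+O_C(1/\log\log x),\]
which is \eqref{eq1.3} for $x$ large. (Alternatively one may quote the known estimate $\a(x,y)=\frac{\log(1+y/\log x)}{\log y}\bigl(1+O(\tfrac{\log\log(1+y)}{\log y})\bigr)$ from \cite{hildten} and substitute $y=(\log x)^C$, noting $y/\log x=(\log x)^{C-1}\to\infty$.)

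For the upper bound in \eqref{eq1.4}, Rankin's inequality with exponent $\a$ gives $\Psi(x,y)\le x^\a\prod_{p\le y}(1-p^{-\a})^{-1}$. Since $\a$ lies in a fixed compact subinterval of $(0,1)$ by \eqref{eq1.3}, the inequality $-\log(1-t)\le t/(1-t)$ and partial summation show that the logarithm of the product is $\ll_C\sum_{p\le y}p^{-\a}\ll_C y^{1-\a}/\log y$; by \eqref{eq1.3}, $y^{1-\a}=y^{1/C}\cdot y^{O_C(1/\log\log x)}\asymp_C\log x$, so the product equals $\exp\bigl(O_C(\log x/\log\log x)\bigr)=x^{o(1)}$, and with \eqref{eq1.3} again $\Psi(x,y)\le x^{\a+o(1)}\le x^{1-1/C+\e}$ for $x$ large. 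For the lower bound, put $k=\lfloor u\rfloor$; the $\binom{\pi(y)}{k}$ products of $k$ distinct primes $\le y$ are distinct $y$-smooth integers, each not exceeding $y^k\le y^u=x$, so $\Psi(x,y)\ge\binom{\pi(y)}{k}\ge(\pi(y)/k)^k$. Since $k\sim u$ and $\pi(y)/k\sim y/\log x=(\log x)^{C-1}$, one has $\log(\pi(y)/k)=(C-1)\log\log x+O(\log\log\log x)$, hence $k\log(\pi(y)/k)=(1-\tfrac1C)\log x+o(\log x)$ and $\Psi(x,y)\ge x^{1-1/C-\e}$ for $x$ large.

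The one genuinely delicate point is \eqref{eq1.3}: the sharp size estimate $\sum_{p\le y}p^{-\s}\log p\asymp y^{1-\s}/(1-\s)$ degenerates as $\s\to1$, so one must first trap $\a$ inside a fixed compact subinterval of $(0,1)$ — via the Chebyshev lower bound for $f$ on one side and the bound for $f(1-\d)$ on the other — before that estimate can be used to pin down $\a$. After that \eqref{eq1.4} is routine; note that because $y$ is only a power of $\log x$, the classical asymptotic $\Psi(x,y)\sim x\rho(u)$ is unavailable here, which is precisely why the two bounds in \eqref{eq1.4} are obtained separately by the soft arguments above.
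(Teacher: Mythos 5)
Your proof is correct, but it follows a genuinely different route from the paper, which disposes of the lemma purely by citation: for \eqref{eq1.3} it quotes the Hildebrand--Tenenbaum estimate $\a = 1 - \frac{\log(u\log(u+1))}{\log y} + O\bigl(\frac 1{\log y}\bigr)$ and substitutes $y=(\log x)^C$, and for \eqref{eq1.4} it combines (1.7) of \cite{hildten} with Theorem 2 of \cite{hildten}. You instead give a self-contained argument: you locate the saddle point directly from its defining equation (Chebyshev's bound to force $\a \ge 1-\frac1C - O(1/\log y)$, the bound $f(1-\d)\ll y^{\d}$ with $\d=\frac1{2C}$ to trap $\a$ in a compact subinterval of $(0,1)$, then $f(\s)\asymp_C y^{1-\s}$ to pin down $\a=1-\frac1C+O_C(1/\log y)$), and you prove \eqref{eq1.4} by Rankin's inequality with exponent $\a$ on one side and the count of squarefree products of $k=\lfloor u\rfloor$ primes $\le y$ on the other; all steps check out, including the estimate $\sum_{p\le y}p^{-\a}\ll_C y^{1-\a}/\log y \asymp \log x/\log y$ making the Euler product $x^{o(1)}$, and $k\log(\pi(y)/k)=(1-\frac1C)\log x(1+o(1))$ for the lower bound. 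What each approach buys: the paper's proof is three lines and inherits the full precision of the saddle-point machinery of \cite{hildten} (far more than the lemma needs), while yours needs only Chebyshev-level prime counting and elementary inequalities, which is arguably more transparent since the lemma only requires $\e$-room on both sides; your parenthetical alternative (quoting $\a=\frac{\log(1+y/\log x)}{\log y}(1+O(\frac{\log\log(1+y)}{\log y}))$ from \cite{hildten}) is essentially the paper's own route. One cosmetic point: the lemma asserts dependence of implied constants on $\e$ only; your constants also depend on $C$, which is harmless here since $C$ is fixed throughout the paper, but you may wish to say so explicitly.
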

 
 \begin{proof}
In \cite{hildten} it is shown that
 \[\a = 1 - \frac{\log(u\log(u+1))}{\log y} +
 O\left(\frac 1{\log y}\right).\]
The inequality \eqref{eq1.3} follows on substituting $y = (\log x)^C$.

The bounds in \eqref{eq1.4} are obtained by combining (1.7) of \cite{hildten} with Theorem 2 of \cite{hildten}.
 \end{proof}
 
 \begin{lem}\label{lem3}
Let $x_1, \ldots, x_N$ be a real sequence with $\|x_n\| \ge M^{-1}$ $(1 \le n \le N)$ where $M$ is a positive integer. Then
 \[\sum_{h=1}^M \Bigg| \sum_{n=1}^N e(hx_n)\Bigg|
 \ge N/6.\]
 \end{lem}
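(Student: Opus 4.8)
\smallskip
\noindent\emph{Sketch of the proof I would give.}
Write $T_h=\sum_{n=1}^{N}e(hx_n)$, so $T_0=N$. For $M\le 2$ the claim is immediate (the hypothesis is vacuous when $M=1$, and forces $x_n\equiv\tfrac12$ when $M=2$, whence $T_h=N(-1)^h$ and $\sum_{h=1}^{2}|T_h|=2N$); so assume $M\ge 3$. The plan is to test the points $x_n$ against a Beurling--Selberg minorant of the indicator of the forbidden arc. Let $I=\{x:\|x\|<1/M\}$, of length $2/M$, and let $\psi$ be a real trigonometric polynomial of degree $\le M$ with
\[
\psi(x)\le\mathbf 1_{I}(x)\ \ (x\in\mb R/\mb Z),\qquad
\widehat\psi(0)=\frac{2}{M}-\frac{1}{M+1},\qquad
|\widehat\psi(h)|\le\frac{1}{M+1}+\frac{2}{M}\ \ (1\le|h|\le M).
\]
Such a $\psi$ is classical (the Selberg extremal minorant); I would cite it, e.g.\ from Vaaler's paper or Montgomery's lectures, since the construction from Beurling's majorant of $\operatorname{sgn}$ is standard and contributes nothing new here.

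\smallskip
The hypothesis enters through the observation that each $x_n$ lies outside $I$, so $\mathbf 1_{I}(x_n)=0$ and hence $\psi(x_n)\le 0$. Summing over $n$, expanding $\psi$ in Fourier series, and pairing $h$ with $-h$ (using that $\psi$ is real),
\[
0\ge\sum_{n=1}^{N}\psi(x_n)=\sum_{|h|\le M}\widehat\psi(h)\,T_h=\widehat\psi(0)\,N+2\operatorname{Re}\sum_{h=1}^{M}\widehat\psi(h)\,T_h ,
\]
whence $\widehat\psi(0)\,N\le 2\sum_{h=1}^{M}|\widehat\psi(h)|\,|T_h|\le 2\big(\max_{1\le h\le M}|\widehat\psi(h)|\big)\sum_{h=1}^{M}|T_h|$. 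Since $\widehat\psi(0)=\tfrac{2}{M}-\tfrac{1}{M+1}>0$, this rearranges to
\[
\sum_{h=1}^{M}|T_h|\ge\frac{\widehat\psi(0)\,N}{2\max_{1\le h\le M}|\widehat\psi(h)|}\ge\frac{\big(\tfrac{2}{M}-\tfrac{1}{M+1}\big)N}{2\big(\tfrac{1}{M+1}+\tfrac{2}{M}\big)}=\frac{(M+2)\,N}{2(3M+2)}\ge\frac{N}{6},
\]
the last inequality because $6(M+2)\ge 2(3M+2)$.

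\smallskip
There is no real obstacle within this deduction, which is just a few lines of duality and bookkeeping; notably the constant $6$ is exactly what the Selberg data $\widehat\psi(0)=|I|-\tfrac1{M+1}$ and $|\widehat\psi(h)|\le\tfrac1{M+1}+|I|$ produce. The one genuine ingredient is the existence of that extremal minorant $\psi$ with those two properties: were a self-contained treatment wanted, constructing $\psi$ (from Beurling's function) would be essentially the whole task, but it is entirely classical, so I would simply quote it.
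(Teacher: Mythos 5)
Your proof is correct, but note that the paper does not actually prove Lemma \ref{lem3}: it simply cites Baker--Harman \cite{bakhar}, so your self-contained argument is a genuine alternative to the paper's ``proof by reference.'' The route you take --- testing the points against the Selberg minorant of the arc $I=\{x:\|x\|<1/M\}$ and dualizing --- is sound in every step I can check: the hypothesis does place each $x_n$ outside the open arc; the possible boundary points with $\|x_n\|=1/M$ cause no trouble, since the continuous minorant of the closed arc is $\le 0$ off the open arc by continuity, so $\psi(x_n)\le 0$ as you claim; the pairing of $h$ with $-h$ is legitimate because $\psi$ is real; and the arithmetic $\bigl(\tfrac2M-\tfrac1{M+1}\bigr)\big/\bigl(2(\tfrac1{M+1}+\tfrac2M)\bigr)=\tfrac{M+2}{2(3M+2)}\ge\tfrac16$ is valid for every $M$, with $1/6$ appearing exactly as the $M\to\infty$ limit, which neatly explains the constant in the statement. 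Your separate treatment of $M\le 2$ is also correct (vacuous for $M=1$, trivial for $M=2$) and is prudent since the Selberg construction is usually stated for arcs of length strictly less than $1$. The only external input is the classical existence of the extremal minorant with $\widehat\psi(0)=|I|-\tfrac1{M+1}$ and $|\widehat\psi(h)|\le\tfrac1{M+1}+|I|$, which is entirely standard (Selberg, Vaaler, Montgomery) and reasonable to quote. Compared with the paper, the citation keeps the exposition short, while your argument has the advantage of being self-contained with explicit constants; the cited source's proof is of the same Fourier-analytic character, so conceptually you are not far from what is being referenced.
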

 
 \begin{proof}
See e.g. \cite{bakhar}.
 \end{proof}

 \begin{lem}\label{lem4}
Let $M > 1$, $N > 1$. Let $a_m$ $(m\sim M)$, $b_n$ $(n\sim N)$ be complex numbers with $|a_m| \le 1$, $|b_n| \le 1$. For a real number $\t$ and a fraction $a/q$ with $|\t - a/q| < 1/q^2$, $(a,q) = 1$, we have
 \begin{align*}
&\sum_{m\sim M}\ \sum_{n\sim N} a_mb_ne(mn\t)\\[2mm]
&\quad\ll \Bigg(\sum_{m\sim M} |a_m|^2 \sum_{n\sim N} |b_n|^2\Bigg)^{1/2} \left(\frac{MN}q + M + N + q\right)^{1/2} (\log 2MN)^{1/2}.
 \end{align*}
 \end{lem}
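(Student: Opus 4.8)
The plan is the classical route for bilinear exponential sums: Cauchy--Schwarz to separate $m$ from $n$, expansion of the resulting square, evaluation of the inner geometric progression, and the standard estimate for a sum of the form $\sum_l\min\big(N,\|l\t\|^{-1}\big)$. One may first dispose of the range $q\ge MN$: two applications of Cauchy--Schwarz give
\[\Big|\sum_{m\sim M}\sum_{n\sim N}a_mb_ne(mn\t)\Big|\le\sum_{m\sim M}|a_m|\ \sum_{n\sim N}|b_n|\le (MN)^{1/2}\Big(\sum_{m\sim M}|a_m|^2\ \sum_{n\sim N}|b_n|^2\Big)^{1/2},\]
and for $q\ge MN$ the factor $\big(\tfrac{MN}q+M+N+q\big)^{1/2}$ already dominates $(MN)^{1/2}$, so the bound holds (the logarithm is not needed here). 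Hence assume $q<MN$ from now on.

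Write $S$ for the double sum, group the terms by $n$, and apply Cauchy--Schwarz in $n$:
\[|S|^2\le\Big(\sum_{n\sim N}|b_n|^2\Big)\sum_{n\sim N}\Big|\sum_{m\sim M}a_me(mn\t)\Big|^2=\Big(\sum_{n\sim N}|b_n|^2\Big)\sum_{m\sim M}\ \sum_{m'\sim M}a_m\overline{a_{m'}}\sum_{n\sim N}e\big((m-m')n\t\big).\]
The innermost sum runs over an interval of length $\asymp N$, hence is $\ll\min\big(N,\|(m-m')\t\|^{-1}\big)$. Applying $|a_m\overline{a_{m'}}|\le\tfrac12\big(|a_m|^2+|a_{m'}|^2\big)$ and symmetrising in $m,m'$, the double sum over $m,m'$ is
\[\ll\sum_{m\sim M}|a_m|^2\sum_{|l|<M}\min\big(N,\|l\t\|^{-1}\big)\ll\Big(\sum_{m\sim M}|a_m|^2\Big)\Big(N+\sum_{1\le l<M}\min\big(N,\|l\t\|^{-1}\big)\Big).\]

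It remains to bound $\sum_{1\le l<M}\min\big(N,\|l\t\|^{-1}\big)$, which is the only step with real content. Using $|\t-a/q|<1/q^2$ and $(a,q)=1$, I would run the standard block argument: split $\{1,\dots,M-1\}$ into $O(M/q+1)$ blocks of at most $q$ consecutive integers, and on such a block write $l=l_0+t$ with $0\le t<q$; since $(a,q)=1$, the points $l_0\t+ta/q$ $(0\le t<q)$ form, modulo $1$, a set of $q$ equally spaced points with gap $1/q$, while the perturbation $t(\t-a/q)$ has modulus $<1/q$, so for each $k\ge1$ only $O(1)$ values of $l$ in the block satisfy $\|l\t\|\in[(k-1)/q,k/q)$; this gives $\sum_{l\in\mathrm{block}}\min\big(N,\|l\t\|^{-1}\big)\ll N+q\log q$. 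Summing over blocks,
\[\sum_{1\le l<M}\min\big(N,\|l\t\|^{-1}\big)\ll\Big(\frac Mq+1\Big)\big(N+q\log q\big)\ll\Big(\frac{MN}q+M+N+q\Big)\log(2MN),\]
the hypothesis $q<MN$ being used to absorb $\log q$. Inserting this into the previous display and taking square roots yields the lemma. The main obstacle is exactly this $\min$-sum estimate --- the rest is Cauchy--Schwarz and a geometric series --- but it is entirely classical (it underlies the usual Weyl-sum bounds; see e.g. Vaughan or Iwaniec--Kowalski), so in practice I would cite it rather than reproduce the block argument, much as the paper cites a reference for Lemma~\ref{lem3}.
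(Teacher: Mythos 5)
Your proof is correct and is exactly the classical argument (Cauchy--Schwarz in $n$, the geometric-series bound, and the standard estimate for $\sum_{l}\min(N,\|l\t\|^{-1})$ via blocks of length $q$, with the trivial bound disposing of $q\ge MN$ so that $\log q\ll\log 2MN$). The paper itself gives no proof but simply cites p.~23 of Harman's \emph{Prime-detecting Sieves}, where this same standard bilinear (Type II) estimate is established, so your route coincides with the cited one.
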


 \begin{proof}
See p. 23 of \cite{har2}
 \end{proof}

We write `$m\sim M$' to indicate $M \le m < 2M$. Let $S_c(x,y)$ denote the set of integers in $[x,cx)$ free of prime factors $> y$.
 \bigskip

 \section{Proof of the Theorem}

Let $a/q$ be a convergent to the continued fraction of $\t$ with $q$ sufficiently large. Thus
 \begin{equation}\label{eq2.1}
\left|\t - \frac aq\right| < \frac 1{q^2} \ , \ (a,q)= 1.
 \end{equation}
We may suppose that $\e$ is sufficiently small, so that
 \[\g : = \frac 13 - \frac 2{3C} - \frac{5\e}6\]
is positive.

Define $x$ by
 \begin{equation}\label{eq2.2}
x^{\frac{1+\g}2} = q. 
 \end{equation}
Suppose that $n \in \mb N$ satisfies
 \begin{equation}\label{eq2.3}
\left\|\frac{an}q + \varphi\right\| \le x^{-\g}, \ n \in S_4(x,y). 
 \end{equation}
Here and below, $y = (\log x)^C$. Then $n$ is free of prime factors $> (\log n)^C$. Moreover,
 \begin{align*}
\|\t n + \varphi\| &\le x^{-\g} + \left|\t - \frac aq\right| \cdot 4x\\[2mm]
&\le x^{-\g} + \frac{4x}{x^{1+\g}} < n^{-\left(\frac 13 - \frac 2{3C}\right)+\e},
 \end{align*}
from \eqref{eq2.1}--\eqref{eq2.3}. Hence it will suffice to show that \eqref{eq2.3} has a solution $n$.

Let $S = S_2(x^{\frac{1+\g}2}, y)$, $J = S_2(x^{\frac{1-\g}2},y)$. By Lemmas \ref{lem1} and \ref{lem2},
 \begin{align}
x^{\frac{1+\g}2\, \left(1-\frac 1C\right) - \frac\e{16}} &\ll |S| \ll x^{\frac{1+\g}2 \left(1 - \frac 1C\right)+\frac \e{16}},\label{eq2.4}\\[2mm]
x^{\frac{1-\g}2\, \left(1-\frac 1C\right) - \frac\e{16}} &\ll |J| \ll x^{\frac{1-\g}2 \left(1 - \frac 1C\right)+\frac \e{16}},\label{eq2.5}
 \end{align}
Here $|\cdots|$ indicates cardinality. We shall show that there are solutions of \eqref{eq2.3} with
 \[n = uv, \ u \in S, \ v \in J.\]

Suppose there are no such solutions of \eqref{eq2.3}. Let
 \[S_h = \sum_{u\in S} \ \sum_{v \in J}
 e\left(\frac{hauv}q\right).\]
Let $H = [x^\g] + 1$. We deduce from Lemma \ref{lem3} and \eqref{eq2.4}, \eqref{eq2.5} that
 \begin{equation}\label{eq2.6}
\sum_{h=1}^H |S_h| \gg x^{1 - \frac 1C - \frac \e 8}.
 \end{equation}

We choose complex numbers $c_h$, $|c_h| \le 1$, such that
 \begin{align}
\sum_{h=1}^H |S_h| &= \sum_{h=1}^H c_h \sum_{u\in S} \ \sum_{v \in J} e\left(\frac{hauv}q\right)\label{eq2.7}\\[2mm]
&= \sum_{u \le 2x^{(1+\g)/2}} \ \sum_{w \le 2x^{(1-\g)/2}H} a_u b_w e\left(\frac{auw}q\right).\notag
 \end{align}
Here
 \begin{align*}
a_u &= \begin{cases}
 1 & \text{if } u \in S\\
 0 & \text{otherwise}\end{cases}\ ,\\[2mm]
 b_w &= \sum_{\substack{hv = w\\
 h \le H,\, v \in J}} c_h. 
 \end{align*}
Thus
 \[\sum_{u\le 2x^{(1+\g)/2}} a_u^2 \ll x^{\frac{1+\g}2 
 \left(1 - \frac 1C\right) +\frac \e{16}},\]
while $b_w \ll x^{\e/8}$ and
 \[\sum_{w\le 2x^{(1-\g)/2}H} b_w^2 \ll x^{\e/8}
 \sum_{w\le 2x^{(1-\g)/2}H} b_w \ll 
 x^{\left(\frac{1-\g}2\right)\left(1 - \frac 1C\right)
 +\g + \frac\e 4}.\]
Here we use \eqref{eq2.4}, \eqref{eq2.5}.

Lemma \ref{lem4} can be applied to the double sum in \eqref{eq2.7} (after a dyadic dissection of the ranges of $u$, $w$) to give the bound
 \begin{align}
\sum_{h=1}^H|S_h| &\ll \left(x^{\frac{1+\g}2\left(1 - \frac 1C\right)+\frac{1-\g}2 \left(1 - \frac 1C\right) + \g + \frac \e 2}\right)^{1/2}\left(\frac{x^{1+\g}}q + x^{\frac{1+\g}2} + x^{\frac{1-\g}2}H+q\right)^{1/2}\label{eq2.8}\\[2mm]
&\ll \left(x^{1 - \frac 1C + \g + \frac \e 2}\right)^{\frac 12}\left(x^{\frac{1+\g}2}\right)^{\frac 12}.\notag
 \end{align}

Combining \eqref{eq2.6}, \eqref{eq2.8}, we have
 \begin{align*}
&x^{1 - \frac 1C - \frac \e 8} \ll x^{\frac 12\left(1 - \frac 1C\right) + \frac{3\g}4 + \frac 14 + \frac \e 4},\\
\frac{3\g}4 &\ge \frac 12\left(1 - \frac 1C\right) - \frac 14 - \frac{2\e}5,\\
\g &\ge \frac 13 - \frac 2{3C} - \frac{4\e}5.
 \end{align*}
This contradicts the definition of $\g$. We conclude that solutions of \eqref{eq2.3} exist. This completes the proof of the theorem.
 \bigskip

 \end{document}